\DeclareMathOperator{\gal}{Gal}
\newcommand{\BibTeX}{{\scshape Bib}\kern-.08em\TeX}
\newcommand{\T}{\S\kern .15em\relax }
\newcommand{\AMS}{$\mathcal{A}$\kern-.1667em\lower.5ex\hbox
        {$\mathcal{M}$}\kern-.125em$\mathcal{S}$}
\newcommand{\resp}{\textit{resp}.\xspace}
\DeclareMathOperator{\proj}{Proj}
\DeclareMathOperator{\spm}{Spm}
\DeclareMathOperator{\rg}{rk}
\DeclareMathOperator{\spec}{Spec}
\renewcommand{\P}{\mathbb{P}}
\newcommand{\C}{\mathbb{C}}
\newcommand{\Q}{\mathbb{Q}}
\newcommand{\adeg}{\widehat{\deg}}
\newcommand{\p}{\mathfrak{p}}
\newcommand{\E}{\overline{E}}
\newcommand{\sE}{\mathcal{E}}
\renewcommand{\O}{\mathcal{O}}
\newcommand{\f}{\mathbb{F}}
\newcommand{\ndot}{\raisebox{.4ex}{.}}
\newtheorem*{thm}{Theorem}
\title{Control of the non-geometrically integral reductions}
\date{\today}
\author{Chunhui Liu}
\address{Institute for Advanced Study in Mathematics\\
Harbin Institute of Technology\\
150001 Harbin\\P. R. China}
\email{chunhui.liu@hit.edu.cn}
\keywords{adelic height, Cayley variety, Chow variety, non-geometrically integral criterion, non-geometrically integral reduction.}
\begin{document}
\def\smfbyname{}
\begin{abstract}
In this paper, for a geometrically integral projective scheme, we will give an upper bound of the product of the norms of its non-geometrically integral reductions over an arbitrary number field. For this aim, we take the adelic viewpoint on this subject.
\end{abstract}
\begin{altabstract}
Dans cet article, pour un sch\'ema projectif g\'eom\'etriquement int\`egre, on donnera une majoration du produit des norms de ses r\'eductions non-g\'eom\'etriquement int\`egre sur un corps de nombres arbitraire. Pour le but, on prend le point de vue ad\'elique autour de ce sujet.
\end{altabstract}

\maketitle
{\footnotesize{\bf MSC 2020.} 11D75, 11G50, 11R56, 14G25, 14G40.}

\tableofcontents
\section{Introduction}

Let $X\hookrightarrow \mathbb P^n_K\rightarrow\spec K$ be a geometrically integral closed sub-scheme over a number field $K$, $\mathscr X\hookrightarrow\mathbb P^n_{\O_K}\rightarrow\spec\O_K$ be its Zariski closure, and $\mathscr X_{\f_\p}=\mathscr X\times_{\spec\O_K}\spec\f_\p\rightarrow\spec\f_\p$ be its fiber at $\p\in\spm\O_K$. By \cite[Th\'eor\`eme 9.7.7]{EGAIV_3}, the set
\begin{equation}\label{non-geometricaly integral reduction}
  \mathcal Q(\mathscr X)=\left\{\p\in\spm\O_K|\;\mathscr X_{\f_\p}\rightarrow\spec\f_\p\hbox{ is not geometrically integral}\right\}
\end{equation}
is finite.

If we replace the geometrically integral property of $X\rightarrow\spec K$ above by the integral property, the set
\[\mathcal Q'(\mathscr X)=\left\{\p\in\spm\O_K|\;\mathscr X_{\f_\p}\rightarrow\spec\f_\p\hbox{ is not integral}\right\}\]
is not finite in general. For example, we consider
\[X=\proj\left(\mathbb Q[T_0,T_1,T_2]/\left(T_0^2+T_1^2\right)\right)\rightarrow\spec\mathbb Q.\]
 Then we have
\[\mathcal Q'(\mathscr X)=\left\{p\hbox{ prime }|\;p\equiv1(\mathrm{mod}\;4)\hbox{ or }p=2\right\}\]
by properties on the quadric residue, which is a infinite set.

By the above reasons, it is reasonable to give a numerical description of the set of non-geometrically integral reductions $\mathcal Q(\mathscr X)$. Actually, it is easy to construct examples such that $\mathcal Q(\mathscr X)=\emptyset$, for example, the scheme $X$ is a hyperplane in $\mathbb P^n_K$. In addition, we can construct examples whose non-geometrically integral reductions are any products of prime ideals. Let
\[X=\proj\left(\mathbb Q[T_0,T_1,T_2]/\left(T_0^2+aT_1T_2\right)\right)\rightarrow\spec\mathbb Q,\]
where $a\in\mathbb Z$. In this case, we have
\[\mathcal Q(\mathscr X)=\left\{p\hbox{ prime }|\;p\mid a\right\}.\]
 Hence, we are interested in the upper bound of $\sum\limits_{\p\in\mathcal Q(\mathscr X)}\log N(\p)$.

\subsection{Brief history}
Traditionally, we only focus on the case of hypersurfaces when $K=\Q$ and $\O_K=\mathbb Z$, and there are fruitful results reported on this subject. By \cite[Exercise 2.4.1]{LiuQing}, we only need to study whether the polynomial defining this hypersurface is absolutely irreducible over the residue field. Usually, lots of former works considered the case of plane curves only.

Up to the author's knowledge, this subject was first considered by A. Ostrowski in \cite{Ostrowski1919} implicitly. In \cite{Schmidt1976_LNM536}, W. M. Schmidt gave an explicit estimate, which is refined by E. Kaltofen in \cite{Kaltofen1985} (see also \cite{Kaltofen1995}).

In \cite{Ruppert1986}, W. M. Ruppert transferred the criterion of absolute irreducibility of polynomials into the existence of certain polynomial solutions to a certain system of partial differential equations, where he considered the de Rham cohomology of some particular complexes. By this result, he gave an upper bound of non-geometrically reductions for the case of arbitrary hypersurfaces, and a sharper upper bound for the case of plane curves. This result improved some previous results, and was generalized by \cite{Shaker2009} and \cite{GaoShuhong2003} to different directions.

In \cite{Zannier1997}, U. Zannier gave an upper bound depending on the multi-degree of a polynomial $f(x,y)$ over $\mathbb Z$. This result is improved by W. M. Ruppert in \cite{Ruppert1999} by refining his method in \cite{Ruppert1986}. In \cite{Ruppert1999AM}, he considered a special kind of plane curves and gave a better upper bound.

In \cite{GaoRodrigues2003}, Shuhong Gao and V. M. Rodrigues applied Newton polytopes to refine the estimate in \cite{Ruppert1999}, where they involved the number of integral points of Newton polytopes into the estimate.
\subsection{Relation to the arithmetic Hodge index theorem}
In \cite[\S 5]{Faltings84}, G. Faltings proved an arithmetic analogue of the Hodge index theorem. Let $\mathscr X\rightarrow\spec\O_K$, be an arithmetic surface, which means it is flat and projective with the relative dimension $1$, whose generic fiber is $X\rightarrow \spec K$. In \cite[Theorem 4 d)]{Faltings84}, the author gave an estimate of
\[\sum_{\p\in \spm\O_K}\left(\left(\hbox{number of components of }\mathscr X_{\f_\p}\right)-1\right),\]
which is related the rank of jacobian of $X$ over $K$. In fact, the above estimate is able to provide an estimate of the number of non-geometrically integral reductions for the case of curves, while the invariant $\sum\limits_{\p\in\mathcal Q(\mathscr X)}\log N(\p)$ can provide the same estimate. But their methods and fundamental ideas are quite different.
\subsection{Adelic viewpoint}
 In this paper, we will give such an upper bound for the case of an arbitrary number field. Let $X\hookrightarrow\mathbb P^n_K$ be a hypersurface, and we consider the Zariski closure $\mathscr X$ of $X$ in $\mathbb P^n_{\O_K}$. In this case, only when $\O_K$ is a principal ideal domain, $\mathscr X\hookrightarrow\mathbb P^n_{\O_K}$ can always be defined by a primitive equation with coefficients in $\O_K$.

 Similar to the method in \cite{Liu-reduced} to study the non-reduced reductions over an arbitrary number field, we introduce the adelic viewpoint to overcome this obstruction. We consider the polynomial with coefficients in $K$ defining $X\hookrightarrow\mathbb P^n_K$ as coefficients in the adelic ring $\mathbb A_K$ with respect to the diagonal embedding, and then we can obtain a primitive $\mathbb A_{\O_K}$-coefficient polynomial by multiplying an element in $\mathbb A_K$ which does not change the height of polynomial in the adelic sense. Then for each $\p\in\spm\O_K$, the $\p$-part of this primitive polynomial of $\mathbb A_{\O_K}$-coefficients is primitive over $\O_{K,\p}$, which defines $\mathscr X_{\p}\hookrightarrow\mathbb P^n_{\O_{K,\p}}$ from $\mathscr X\hookrightarrow\mathbb P^n_{\O_K}$ via the base change $\spec\O_{K,\p}\rightarrow\spec\O_K$. Then we can consider the reduction type of each $\mathscr X_{\p}$ modulo $\p$.

In order to judge whether a projective hypersurface is geometrically integral, we use a numerical criterion of Ruppert \cite[Satz 3, Satz 4]{Ruppert1986}. For the general case, we use the theory of Chow varieties and Cayley varieties to reduce it to the case of hypersurfaces, which is similar to that in \cite[\S6]{Liu-reduced}. In fact, we have the following estimate in Theorem \ref{non-geometricaly integral reduction of general schemes}.
 \begin{thm}
   Let $X$ be a geometrically integral closed sub-scheme of $\mathbb P^n_K$ of pure dimension $d$ and degree $\delta$, and $\mathscr X$ be the Zariski closure of $X$ in $\mathbb P^n_{\O_K}$. Then we have
   \[\frac{1}{[K:\Q]}\sum_{\p\in\mathcal Q(\mathscr X)}\log N(\p)\leqslant(\delta^2-1)h(X)+C(n,d,\delta),\]
   where $h(X)$ is a height of $X$ and $N(\p)=\#(\O_K/\p)$. We will give the above constant $C(n,d,\delta)$ explicitly in Theorem \ref{non-geometricaly integral reduction of general schemes}, and we have $C(n,d,\delta)\ll_n\delta^3$.
 \end{thm}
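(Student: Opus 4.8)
The plan is to reduce the general case to the hypersurface case via Cayley/Chow forms, exactly as advertised in the introduction, and to carry out the hypersurface case with Ruppert's numerical criterion read adelically.

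The plan is to encode geometric integrality of $X$ as a non-vanishing condition at a point of a Chow variety, and then to read the bad primes off the divisibilities of the polynomials defining the complementary ``bad locus'', evaluated at the Chow coordinates of $\mathscr X$ once these are made integral and primitive in the adelic sense. Recall that $X$, of pure dimension $d$ and degree $\delta$ in $\mathbb P^n_K$, has a Cayley (Chow) form $F_X$: a polynomial multihomogeneous of degree $\delta$ in each of $d+1$ groups of $n+1$ dual variables, whose coefficient vector is the Chow point $\mathrm{ch}(X)$. Geometric integrality of $X$ --- and, for each $\p$, of the reduction $\mathscr X_{\f_\p}$ --- is equivalent to geometric integrality of the corresponding multihomogeneous Cayley hypersurface, that is, to absolute irreducibility of $F_X$, resp.\ of the Cayley form of the cycle $[\mathscr X_{\f_\p}]$. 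By Ruppert's numerical criterion \cite[Satz 3, Satz 4]{Ruppert1986}, in its multihomogeneous incarnation, this absolute irreducibility is equivalent to the maximal rank of an explicit matrix with entries linear in the coefficients of $F_X$ and of size depending only on $n,d,\delta$, the degree bounds in the underlying differential system being arranged so that a rank drop also detects a repeated factor; hence the bad locus $\mathcal R=\mathcal R_{n,d,\delta}$ inside the Chow variety is cut out by the maximal minors of this matrix, polynomials homogeneous of degree $\delta^2-1$ in the Chow coordinates.

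Because $\O_K$ need not be principal, $\mathscr X\hookrightarrow\mathbb P^n_{\O_K}$ need not possess a single primitive Chow form over $\O_K$; following \cite{Liu-reduced} I pass to the adelic ring, viewing $\mathrm{ch}(X)$ diagonally in $\mathbb A_K$ and rescaling it by a suitable idele so as to obtain a representative primitive over $\mathbb A_{\O_K}$, of adelic height $h(X)$, whose $\p$-component reduces modulo $\p$ to the Chow point of $\mathscr X_{\f_\p}$ --- here one uses that $\mathscr X\to\spec\O_K$ is flat, so that the fibres stay of pure dimension $d$ and degree $\delta$ and the Chow form is compatible with reduction. Thus $\p\in\mathcal Q(\mathscr X)$ exactly when $\p$ divides, in $\O_{K,\p}$, every maximal minor $M_j$ of the Ruppert matrix evaluated at this primitive Chow point. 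The adelic vector $\bigl(M_j(\mathrm{ch}(X))\bigr)_j$ is integral at every place and nonzero (since $X$ is geometrically integral over $K$), so the product formula gives
\[\frac1{[K:\Q]}\sum_{\p\in\mathcal Q(\mathscr X)}\log N(\p)\;\leqslant\;\frac1{[K:\Q]}\sum_{v\mid\infty}\log\max_j\|M_j(\mathrm{ch}(X))\|_v;\]
and since each $M_j$ is homogeneous of degree $\delta^2-1$ in the Chow coordinates, while the archimedean part of the height of $\mathrm{ch}(X)$ equals $h(X)$ by primitivity, the right-hand side is bounded by $(\delta^2-1)\,h(X)$ plus a combinatorial constant (a logarithm of a factorial of the matrix size times the number of maximal minors). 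Absorbing moreover the finitely many primes of residue characteristic $\leqslant\delta$ treated by hand (contributing $\ll\delta$ after normalisation), one arrives at
\[\frac1{[K:\Q]}\sum_{\p\in\mathcal Q(\mathscr X)}\log N(\p)\;\leqslant\;(\delta^2-1)\,h(X)+C(n,d,\delta),\qquad C(n,d,\delta)\ll_n\delta^3.\]

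The main difficulty is to run the Ruppert criterion for the \emph{multihomogeneous} Cayley form $F_X$ in $(d+1)(n+1)$ variables while keeping the size of the relevant matrix --- and hence the degree of its maximal minors in the Chow coordinates --- governed by $\delta$ alone, all dependence on $n$ and $d$ being confined to the number of minors and to the sizes of their coefficients; this is what secures the exponent $\delta^2-1$ rather than one of order $((d+1)\delta)^2$. Two subsidiary points also need care. First, the differential system behind Ruppert's matrix can degenerate in residue characteristic $\leqslant\delta$ (partial derivatives vanishing identically), so those primes must be excised and estimated directly --- harmlessly, since $\sum_{\car\f_\p\leqslant\delta}\log N(\p)\ll[K:\Q]\,\delta$. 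Second, one must verify that forming the Chow form really commutes with reduction modulo $\p$ for \emph{every} $\p$, so that no component or degree is lost in the special fibre and $\mathcal Q(\mathscr X)$ is genuinely controlled by $\mathcal R$; this is where the theory of Chow and Cayley varieties over $\spec\O_K$, in the adelic form of \cite[\S6]{Liu-reduced}, does the essential work. Granting these, the displayed inequality is the routine combination of the product formula with the multilinearity of the adelic height in the Chow coordinates.
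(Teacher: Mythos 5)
There is a genuine gap at exactly the point you yourself flag as ``the main difficulty.'' You propose to feed the \emph{multihomogeneous} Chow form $F_X$ --- of multi-degree $(\delta,\dots,\delta)$ in $d+1$ groups of $n+1$ dual variables --- into ``a multihomogeneous incarnation'' of Ruppert's criterion, and hope that the resulting minors have degree $\delta^2-1$ rather than $((d+1)\delta)^2-1$ in the Chow coordinates. But Ruppert's Satz~3 and Satz~4, as cited in the paper, are statements about a homogeneous form of degree $\delta$ in a single projective space; they give no such multihomogeneous version, and you do not construct one. Applied naively to $F_X$ viewed as a form of total degree $(d+1)\delta$, Ruppert's theorem would give minors of degree $((d+1)\delta)^2-1$, destroying the claimed constant $\delta^2-1$. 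The paper sidesteps this entirely by working not with the Chow form but with the \emph{Cayley variety} $\Psi_X=q(\Gamma\cap p^{-1}(X))$, which by \cite[Proposition~3.4]{Chen1} (or \cite[Proposition 2.2, 2.7]{Liu-reduced}) is a geometrically integral hypersurface of degree exactly $\delta$ in a single projective space $\mathbb P\left(\wedge^{d+1}\sE^\vee_K\right)\cong\mathbb P^{N(n,d)}_K$. The standard Ruppert criterion then applies verbatim, giving polynomials $\phi_j$ of degree $\delta^2-1$ in the $\binom{N(n,d)+\delta}{\delta}$ coefficients, and the comparison $h(\Psi_X)-h_{\overline{\O(1)}}(X)\leqslant C$ from \cite[Proposition 3.7]{Liu-reduced} closes the argument. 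Without the Cayley-variety step (or a genuine proof of the multihomogeneous Ruppert bound you postulate), the exponent $\delta^2-1$ in your estimate is not justified.

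A secondary remark: your proposed excision of primes with residue characteristic $\leqslant\delta$ is unnecessary and reflects a misreading of how Ruppert's criterion is used. Only direction~(2) of Propositions~\ref{criterion of irreducibility of curves} and~\ref{criterion of irreducibility} (irreducible $\Rightarrow$ some $\phi_j\neq 0$) requires characteristic $0$, and it is invoked only over the number field $K$ to select a nonvanishing $\phi_j$. Direction~(1) (reducible $\Rightarrow$ all $\phi_j=0$) holds over any algebraically closed field, in particular over $\overline{\f_\p}$ for every $\p$; this is precisely what shows $|\phi_j(b^{(\p)})|_\p<1$ for $\p\in\mathcal Q(\mathscr X)$ with no restriction on the residue characteristic. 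So no primes need to be, or are, treated by hand in the paper. The rest of your argument (adelic rescaling of the coefficient vector to an $\mathbb A_{\O_K}$-primitive representative, flatness of $\mathscr X/\O_K$ and compatibility of the Cayley construction with reduction, then the product formula bounding the sum over $\mathcal Q(\mathscr X)$ by the archimedean contribution) follows the paper's strategy and is sound once the hypersurface to which Ruppert is applied is chosen correctly.
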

 If we consider the case of plane curves ($d=1$ and $n=2$) and use the naive height (see Definition \ref{classic height of hypersurface}) in the above theorem, we are able to obtain $C(n,d,\delta)\ll_n\delta^2\log\delta$ in the above estimate, see Proposition \ref{upper bound of non-geometrically integral reductions of plane curves}.
\subsection{Structure of the article}
This paper is organized as follows. In \S 2, we introduce the useful notions on Diophantine geometry and Arakelov geometry. In \S 3, we recall some results of Ruppert on the criterion of the geometrically integral property. In \S 4, we give an upper bound of the non-geometrically reductions for the case of hypersurfaces by the above results of Ruppert under the adelic viewpoint, and such an upper bound for the general case by applying the theory of Chow varieties and Cayley varieties. In Appendix A, we provide the solution to an exercise in \cite{LiuQing}, which is useful in this work.
\subsection*{Acknowledgement}
I would like to thank Prof. Per Salberger for introducing me the master thesis \cite{Thorarinsson2011} of his former student Stef\'an \TH\'orarinsson, which is a good summary for the previous works on this subject. I would like also to thank the anonymous referee for suggestions on the revision of this paper.
\section{Height functions}
The height of arithmetic varieties is an invariant which evaluates the arithmetic complexity of varieties. In order to study it, we introduce some preliminaries of Arakelov geometry and Diophantine geometry.
\subsection{Normed vector bundles}
  Normed vector bundles are one of the main research objects in Arakelov geometry. Let $K$ be a number field and $\O_K$ be its ring of integers. We denote by $M_K$ the set of places of $K$, by $M_{K,f}$ the set of its finite places and by $M_{K,\infty}$ the set of its infinite places. A \textit{normed vector bundle} on $\spec\O_K$ is a pair $\E=\left(E,\left(\|\ndot\|_v\right)_{v\in M_{K,\infty}}\right)$, where:
  \begin{itemize}
    \item $E$ is a projective $\O_K$-module of finite rank;
    \item $\left(\|\ndot\|_v\right)_{v\in M_{K,\infty}}$ is a family of norms, where $\|\ndot\|_v$ is a norm over $E\otimes_{\O_K,v}\C$ which is invariant under the action of $\gal(\C/K_v)$.
  \end{itemize}

If all the norms $\left(\|\ndot\|_v\right)_{v\in M_{K,\infty}}$ are Hermitian, we call $\E$ a \textit{Hermitian vector bundle} on $\spec\O_K$. In particular, if $\rg_{\O_K}(E)=1$, we say that $\E$ is a \textit{Hermitian line bundle} since all Archimedean norms are Hermitian in this case.
\subsection{Height of arithmetic varieties}

In this part, we introduce a kind of height functions of arithmetic varieties defined via the arithmetic intersection theory developed by Gillet and Soul\'e in \cite{Gillet_Soule-IHES90}, which is first introduced by Faltings in \cite[Definition 2.5]{Faltings91}, see also \cite[III.6]{Soule92}.
\begin{defi}[Arakelov height]\label{arakelov height of projective variety}
Let $K$ be a number field, $\O_K$ be its ring of integers, $\overline{\sE}$ be a Hermitian vector bundle of rank $n+1$ on $\spec\O_K$, and $\overline {\mathcal L}$ be a Hermitian line bundle on $\mathbb P(\sE)$. Let $X$ be a pure dimensional closed sub-scheme of $\mathbb P(\sE_K)$ of dimension $d$, and $\mathscr X$ be the Zariski closure of $X$ in $\mathbb P(\sE)$. The \textit{Arakelov height} of $X$ is defined as the arithmetic intersection number
\begin{equation*}
  \frac{1}{[K:\Q]}\adeg\left(\widehat{c}_1(\overline{\mathcal{L}})^{d+1}\cdot[\mathscr X]\right),
\end{equation*}
where $\widehat{c}_1(\overline{\mathcal{L}})$ is the arithmetic first Chern class of $\overline{\mathcal L}$ (see \cite[Chap. III.4, Proposition 1]{Soule92} for its definition), and $\adeg(\ndot)$ is the \textit{Arakelov degree} of arithmetic cycles. This height is denoted by $h_{\overline{\mathcal{L}}}(X)$ or $h_{\overline{\mathcal{L}}}(\mathscr X)$.
\end{defi}
\subsection{Height of hypersurfaces}
Let $X$ be a hypersurface in $\mathbb P^n_K$. By \cite[Proposition 7.6, Chap. I]{GTM52}, $X$ is defined by a homogeneous polynomial. We define a height function of hypersurfaces by considering its defining polynomial.
\begin{defi}[Naive height]\label{classic height of hypersurface}
Let $f(T_0,\ldots,T_n)=\sum\limits_{(i_0,\ldots,i_n)\in\mathbb N^{n+1}}a_{i_0,\ldots,i_n}T_0^{i_0}\cdots T_n^{i_n}$ be a polynomial. We define the \textit{naive height} of $f(T_0,\ldots,T_n)$ as
\[H_K(f)=\prod_{v\in M_K}\max\limits_{(i_0,\ldots,i_n)\in\mathbb N^{n+1}}\left\{|a_{i_0,\ldots,i_n}|_v\right\}^{[K_v:\Q_v]},\]
and $h(f)=\frac{1}{[K:\Q]}\log H_K(f)$. In addition, if $f(T_0,\ldots,T_n)$ is the defining polynomial of the hypersurface $X\hookrightarrow\mathbb P^n_K$,  we define the \textit{naive height} of $X$ as
\[h(X)=h(f).\]
\end{defi}

\subsection{Adelic height}
In order to consider the reductions over an arbitrary number field, we will introduce the so-called \textit{adelic height} of a polynomial, which has been applied to study the non-reduced reductions in \cite{Liu-reduced}.

Let $K$ be a number field, $\O_K$ be its ring of integers. In addition, we denote by
\[\mathbb A_K=\left\{(a_v)_v\in\prod_{v\in M_K}K_v\mid\;a_v\in\O_{K,v}\hbox{ except a finite number of }v\in M_{K,f}\right\}\]
the adelic ring of $K$, by
 \[\mathbb{A}_{\O_K}=\left\{(a_v)_v\in \mathbb A_K|\;a_v\in\O_{K,v}\hbox{ for all }v\in M_{K,f}\right\}\]
the integral adelic ring of $K$, and by $\Delta:K\hookrightarrow\mathbb A_K$ the diagonal embedding. Let $a=(a_v)_{v\in M_K}\in\mathbb A_K$, we define
\[|a|_{\mathbb A_K}=\prod_{v\in M_K}|a_v|_v^{[K_v:\Q_v]}.\]
\begin{defi}[Local part]\label{p-part}
Let $\{a_{i_0,\ldots,i_n}\}=\{(a^v_{i_0,\ldots,i_n})_{v\in M_K}\}$ be a finite family of elements in $\mathbb A_K$ with the indices $(i_0,\ldots,i_n)\in\mathbb N^{n+1}$, and
\[f(T_0,\ldots,T_n)=\sum_{\begin{subarray}{x}(i_0,\ldots,i_n)\in\mathbb N^{n+1}\end{subarray}}a_{i_0,\ldots,i_n}T_0^{i_0}\cdots T_n^{i_n}\]
be a non-zero polynomial in $\mathbb A_K[T_0,\ldots,T_n]$. For each $v\in M_K$, we denote by
\[f^{(v)}(T_0,\ldots,T_n)=\sum_{\begin{subarray}{x}(i_0,\ldots,i_n)\in\mathbb N^{n+1}\end{subarray}}a^v_{i_0,\ldots,i_n}T_0^{i_0}\cdots T_n^{i_n}\]
the \textit{$v$-part} of $f(T_0,\ldots,T_n)$, or by $f^{(\p)}(T_0,\ldots,T_n)$ for $\p\in\spm\O_K$ corresponding to the place $v\in M_{K,f}$, which is called the \textit{$\p$-part} of $f(T_0,\ldots,T_n)$.
\end{defi}

\begin{defi}[Adelic height]
  Let $f(T_0,\ldots,T_n)=\sum\limits_{\begin{subarray}{c} (i_0,\ldots,i_n)\in\mathbb{N}^{n+1}\end{subarray}}a_{i_0,\ldots,i_n}T_0^{i_0}\cdots T_n^{i_n}$ be a polynomial with coefficients in $\mathbb A_K$, where we denote $a_{i_0,\ldots,i_n}=(a^v_{i_0,\ldots,i_n})_{v\in M_K}\in\mathbb A_K$ for every index $(i_0,\ldots,i_n)$ in the above polynomial. We define
  \[H_{\mathbb A_K}(f)=\prod_{v\in M_{K}}\max_{\begin{subarray}{x}(i_0,\ldots,i_n)\in\mathbb N^{n+1}\end{subarray}}\{|a^v_{i_0,\ldots,i_n}|_v\}^{[K_v:\Q_v]}\]
as the \textit{adelic height} of $f$. In addition, we denote $h(f)=\frac{1}{[K:\Q]}\log H_{\mathbb A_K}(f)$.
\end{defi}
Let $f(T_0,\ldots,T_n)=\sum\limits_{\begin{subarray}{c} (i_0,\ldots,i_n)\in\mathbb{N}^{n+1}\end{subarray}}a_{i_0,\ldots,i_n}T_0^{i_0}\cdots T_n^{i_n}$ be a polynomial with coefficients in $K$, and $c\in\mathbb A_K$ with $|c|_{\mathbb A_K}=1$. Let
 \[g(T_0,\ldots,T_n)=\sum_{\begin{subarray}{x}(i_0,\ldots,i_n)\in\mathbb N^{n+1}\end{subarray}}c\Delta(a_{i_0,\ldots,i_n})T_0^{i_0}\cdots T_n^{i_n}\]
be the polynomial with coefficients in $\mathbb A_K$. Then by definition, we have
\begin{equation}\label{compare adelic height and classic height}
  H_{\mathbb A_K}(g)=H_K(f),
\end{equation}
where $H_K(f)$ is defined in Definition \ref{classic height of hypersurface}.

Let
\[f(T_0,\ldots,T_n)=\sum_{\begin{subarray}{c} (i_0,\ldots,i_n)\in\mathbb{N}^{n+1}\end{subarray}}a_{i_0,\ldots,i_n}T_0^{i_0}\cdots T_n^{i_n}\]
be a polynomial with coefficients in $K$. By \cite[Lemme 3.8]{Liu-reduced}, there exists an element $c\in\mathbb A_K$ with $|c|_{\mathbb A_K}=1$, such that for each $v\in M_{K,f}$, we have
\[\max_{\begin{subarray}{c} (i_0,\ldots,i_n)\in\mathbb{N}^{n+1}\end{subarray}}\{|c\Delta(a_{i_0,\ldots,i_n})|_v\}=1.\]
Let $b_{i_0,\ldots,i_n}=c\Delta(a_{i_0,\ldots,i_n})$, then
\begin{equation}\label{adelicly primitive polynomial}
F(T_0,\ldots,T_n)=\sum_{\begin{subarray}{c} (i_0,\ldots,i_n)\in\mathbb{N}^{n+1}\end{subarray}}b_{i_0,\ldots,i_n}T_0^{i_0}\cdots T_n^{i_n}\in\mathbb A_{\O_K}[T_0,\ldots,T_n],
\end{equation}
which is called an \textit{adelicly primitive polynomial} of $f$.

\section{A criterion of non-geometrically property}
Let $X$ be a geometrically integral hypersurface of $\mathbb P^n_K$ defined by the homogeneous polynomial $f(T_0,\ldots,T_n)$, and $\mathscr X$ be the Zariski closure of $X$ in $\mathbb P^n_{\O_K}$. For all $\p\in\spm\O_K$, in order to study the reduction of $\mathscr X \hookrightarrow\mathbb P^n_{\O_K}\rightarrow\spec\O_K$ at $\p$, we factor the reduction through the localization at $\p$. More precisely, we consider the Cartesian diagram
   \[\xymatrix{\mathscr X_{\f_\p}\ar[r]\ar@{^{(}->}[d] \ar@{}[rd]|{\Box}& \mathscr X_{\O_{K,\p}} \ar[r]\ar@{^{(}->}[d]\ar@{}[rd]|{\Box}& \mathscr X\ar@{^{(}->}[d]\\\mathbb P^n_{\f_\p}\ar[r]\ar[d] \ar@{}[rd]|{\Box}& \mathbb P^n_{\O_{K,\p}} \ar[r]\ar[d]\ar@{}[rd]|{\Box}& \mathbb P_{\O_K}^n\ar[d]\\ \spec \f_\p\ar[r]&\spec\O_{K,\p} \ar[r]&\spec\O_K.}\]
   By definition, $\mathscr X_{\O_{K,\p}}\hookrightarrow\mathbb P^n_{\O_{K,\p}}$ is defined by the $\p$-part $F^{(\p)}(T_0\,\ldots,T_n)$ of $F(T_0,\ldots,T_n)$ (see Definition \ref{p-part} for its definition, which is primitive over $\O_{K,\p}$ by the construction of $F(T_0,\ldots,T_n)$ in \eqref{adelicly primitive polynomial}.

By \cite[Exercise 2.4.1]{LiuQing} (see \cite[Remarque 5.2]{Liu-reduced} for a projective version), for an arbitrary $\p\in\spm\O_K$, the fact that the polynomial $F^{(\p)}(T_0\,\ldots,T_n)$ modulo $\p[T_0,\ldots,T_n]$ is not absolutely irreducible over $\f_\p$ is verified if and only if $\mathscr X_{\f_\p}$ is not geometrically integral over $\spec\f_\p$. So in order to control the set $\mathcal Q(\mathscr X)$ introduced in \eqref{non-geometricaly integral reduction}, we need to study the absolute irreducibility of $F^{(\p)}(T_0,\ldots,T_n)\mod \p[T_0,\ldots,T_n]$ for all $\p\in\spm\O_K$.

The first result is for the case of plane curves.
\begin{prop}[\cite{Ruppert1986}, Satz 3]\label{criterion of irreducibility of curves}
  Let
\[g(T_0,T_1,T_2)=\sum_{\begin{subarray}{c} (i_0,i_1,i_2)\in\mathbb{N}^{3}\\ i_0+i_1+i_2=\delta\end{subarray}}b_{i_0,i_1,i_2}T_0^{i_0}T_1^{i_1} T_2^{i_2}\]
be a homogeneous polynomial of degree $\delta$ over an algebraically closed field $k$. Then there exists a family of homogeneous polynomial $\{\phi_j\}_{j\in J}\in\mathbb Z[b_{i_0,i_1,i_2}]$ with the index set $J$ and variables $\{b_{i_0,i_1,i_2}|\;(i_0,i_1,i_2)\in\mathbb N^{3},\;i_0+i_1+i_2=\delta\}$, which are of degree $\delta^2-1$ and length smaller than $\delta^{3\delta^2-3}$, such that
\begin{enumerate}
  \item If $F$ is reducible, then $\phi_j(b_{i_0,i_1,i_2})=0$ for every $j\in J$;
  \item If $F$ is irreducible and $k$ is of characteristic $0$, then there exists at least one $j\in J$, such that $\phi_j(b_{i_0,i_1,i_2})\neq0$.
\end{enumerate}
\end{prop}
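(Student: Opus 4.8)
The plan is to recover Ruppert's reduction of absolute irreducibility to a rank condition on an explicit linear system, and then to take the $\phi_j$ to be suitable minors of that system. Write $b=(b_{i_0,i_1,i_2})$ for the family of coefficients of $g$, regarded as indeterminates. Following Ruppert, attach to $g$ the $k$-vector space $V$ of candidate logarithmic $1$-forms on $\mathbb{P}^2_k$ having at worst logarithmic poles along $\{g=0\}$ and a prescribed order of growth along the hyperplane $\{T_0=0\}$; concretely $V$ is parametrised by a pair of polynomials of degrees $\delta-1$ and $\delta-2$, so $\dim_k V=\binom{\delta+1}{2}+\binom{\delta}{2}=\delta^2$, and I set $N:=\delta^2$. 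The closedness condition $d\omega=0$ on $\omega\in V$ translates into a single polynomial identity between two forms of bounded degree, and equating the coefficients of the monomials in $(T_0,T_1,T_2)$ on both sides produces a homogeneous linear system $M(b)\cdot v=0$ in the $N$ unknowns $v$ (the coefficients of the parametrising pair), whose matrix $M(b)$ has in each slot a single monomial $c\cdot b_{i_0,i_1,i_2}$ with $c\in\mathbb{Z}$ and $|c|\le\delta$, the integer $c$ being a difference of two exponents each of which is at most $\delta$. The logarithmic differential of $g$ itself always solves the system, so $\mathrm{rk}\,M(b)\le N-1$ for every $b$.

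I would then let $\{\phi_j\}_{j\in J}$ be the family of all $(N-1)\times(N-1)$ minors of $M(b)$, with $J$ the finite set of choices of $N-1$ rows and $N-1$ columns. Each $\phi_j$ lies in $\mathbb{Z}[b_{i_0,i_1,i_2}]$ and is homogeneous of degree $N-1=\delta^2-1$, since every entry of $M(b)$ is homogeneous of degree $1$ in $b$; and its length (the sum of the absolute values of its coefficients) is bounded, through the Leibniz expansion of the determinant, by the number $(N-1)!$ of permutation terms times the bound $\delta^{N-1}$ for the product of the $N-1$ integer coefficients occurring, so that its length is at most $(\delta^2-1)!\,\delta^{\delta^2-1}$, and
\[
(\delta^2-1)!\,\delta^{\delta^2-1}<(\delta^2)^{\delta^2-1}\delta^{\delta^2-1}=\delta^{3(\delta^2-1)}=\delta^{3\delta^2-3}.
\]

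It then remains to link the vanishing of all $\phi_j$ with reducibility by translating everything into the rank of $M(b)$. For (1): if $g$ factors over $k$, then the logarithmic differentials of its irreducible factors yield at least two $k$-linearly independent solutions of the system — this inclusion $\dim\ker M(b)\ge\#\{\text{absolutely irreducible factors of }g\}$ holds in every characteristic, the solutions being distinguished by their residue divisors along the components of $\{g=0\}$ — whence $\mathrm{rk}\,M(b)\le N-2$, so every $(N-1)$-minor vanishes and $\phi_j(b)=0$ for all $j$. For (2): if $g$ is absolutely irreducible and $\mathrm{char}\,k=0$, one invokes Ruppert's theorem that $\dim\ker M(b)$ equals the number of absolutely irreducible factors of $g$, here $1$; then $\mathrm{rk}\,M(b)=N-1$, so at least one $(N-1)$-minor is non-zero, i.e. $\phi_j(b)\ne0$ for some $j$. (One should also check that the parametrisation of $V$ by the degree-$(\delta-1,\delta-2)$ pair really makes these logarithmic differentials admissible, possibly after a harmless renumbering of the coordinates $T_i$, which affects neither absolute irreducibility nor the conclusion.)

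The only step that is not routine bookkeeping is the dimension identity underlying (2): in characteristic $0$ the space of closed forms in $V$ has dimension exactly the number of absolutely irreducible components of $\{g=0\}$. The inequality "$\ge$" is the elementary construction just used. The hard inequality "$\le$" is the heart of Ruppert's argument: one shows that a closed $\omega\in V$, after subtracting a suitable $\mathbb{Z}$-linear combination $\sum_i n_i\, d\log g_i$ of the logarithmic differentials of the irreducible factors, extends to a global regular $1$-form on $\mathbb{P}^2_k$ and is therefore $0$, so $\omega=\sum_i n_i\, d\log g_i$; the growth condition along $\{T_0=0\}$ then forces all the $n_i$ to coincide. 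The integrality of the residues $n_i$ is exactly where characteristic $0$ enters — in positive characteristic the kernel of the Cartier operator supplies extra closed forms and (2) genuinely fails — and carrying out this de Rham computation, together with the precise verification that the relevant dimension is $\delta^2$, is the part I expect to cost the most effort; the determinantal degree and length estimates are then immediate.
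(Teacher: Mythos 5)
The paper does not prove this proposition; it is imported verbatim from Ruppert's article (Satz 3 of \cite{Ruppert1986}), so there is no in-paper argument to compare against. Your proposal is a faithful reconstruction of Ruppert's original proof: you set up the closedness condition on a space $V$ of rational $1$-forms with logarithmic poles along $\{g=0\}$, observe that the resulting linear system has matrix $M(b)$ with monomial entries $c\,b_{i_0,i_1,i_2}$ where the integer $c$ is a difference of exponents (hence $|c|\leqslant\delta$), note that $d\log g$ always lies in the kernel so $\mathrm{rk}\,M(b)\leqslant\delta^2-1$, take the $\phi_j$ to be all $(\delta^2-1)$-minors, and bound the length by $(\delta^2-1)!\,\delta^{\delta^2-1}<\delta^{3\delta^2-3}$. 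This is exactly Ruppert's scheme, and your dimension count $\binom{\delta+1}{2}+\binom{\delta}{2}=\delta^2$, the reduction of (1) to the linear independence of the $d\log g_i$, and the identification of the characteristic-zero de Rham step (and its failure via the Cartier operator in positive characteristic) as the crux of (2) are all correct. The one place I would ask for more care is the description of $V$: the precise degree bounds on the parametrising pair are what make both $\dim V=\delta^2$ and the containment of all the $d\log g_i$ in $V$ true simultaneously, and an off-by-one there would invalidate both the minor size and the ``enough solutions'' direction. You flag the de Rham dimension identity as the step you have not executed; that is indeed where the real mathematics of Ruppert's Satz 1 lives, and the rest of your write-up would go through once it is supplied.
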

The second one is for the case of general hypersurfaces.
\begin{prop}[\cite{Ruppert1986}, Satz 4]\label{criterion of irreducibility}
  Let
\[g(T_0,\ldots,T_n)=\sum_{\begin{subarray}{c} (i_0,\ldots,i_n)\in\mathbb{N}^{n+1}\\ i_0+\cdots+i_n=\delta\end{subarray}}b_{i_0,\ldots,i_n}T_0^{i_0}\cdots T_n^{i_n}\]
be a homogeneous polynomial of degree $\delta$ over an algebraically closed field $k$. Then there exists a family of homogeneous polynomial $\{\phi_j\}_{j\in J}\in\mathbb Z[b_{i_0,\ldots,i_n}]$ with the index set $J$ and variables $\{b_{i_0,\ldots,i_n}|\;(i_0,\ldots,i_n)\in\mathbb N^{n+1},\;i_0+\cdots+i_n=\delta\}$, which are of degree $\delta^2-1$ and length smaller than $\delta^{3\delta^2-3}\left[{n+\delta\choose \delta}3^\delta\right]^{\delta^2-1}$, such that
\begin{enumerate}
  \item If $F$ is reducible, then $\phi_j(b_{i_0,\ldots,i_n})=0$ for every $j\in J$;
  \item If $F$ is irreducible and $k$ is of characteristic $0$, then there exists at least one $j\in J$, such that $\phi_j(b_{i_0,\ldots,i_n})\neq0$.
\end{enumerate}
\end{prop}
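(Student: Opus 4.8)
The plan is to reduce the case of a hypersurface in $\mathbb P^n_k$ to the case of a plane curve, where Proposition \ref{criterion of irreducibility of curves} applies directly. Since absolute irreducibility is unchanged under extension of the base field, I work over $\overline k$, and I may assume $\delta\geqslant2$, the case $\delta=1$ being trivial as a linear form is absolutely irreducible. For a rank-$3$ matrix $A\in\{-1,0,1\}^{3\times(n+1)}$, write $g_A(S_0,S_1,S_2)$ for the ternary form obtained from $g$ by substituting $T_i=\sum_k A_{ik}S_k$; geometrically $g_A$ cuts out the intersection of $V(g)\subset\mathbb P^n$ with the $2$-plane $E_A$ parametrised by $A$. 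The geometric input is the Bertini-type fact that $g_A$ is absolutely irreducible whenever $E_A$ is sufficiently general and $g$ is absolutely irreducible; conversely, if $g=g_1g_2$ is reducible, then for \emph{every} $A$ with $g_A\neq0$ one has $g_A=(g_1)_A(g_2)_A$, both factors being nonzero (hence non-constant, having positive degree). The crux is to make ``sufficiently general'' effective: one must produce a finite family $\mathcal A=\mathcal A(n,\delta)$ of such matrices so that for \emph{every} absolutely irreducible homogeneous $g$ of degree $\delta$ in $T_0,\ldots,T_n$ at least one $g_A$ ($A\in\mathcal A$) is absolutely irreducible.

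Granting $\mathcal A$, the $\phi_j$ are obtained by bookkeeping. For each $A\in\mathcal A$ apply Proposition \ref{criterion of irreducibility of curves} to the degree-$\delta$ ternary form $g_A$, obtaining homogeneous polynomials $\psi^A_\ell$ of degree $\delta^2-1$ and length $<\delta^{3\delta^2-3}$ in the coefficients $c_{e_0,e_1,e_2}$ of $g_A$. Each $c_{e_0,e_1,e_2}$ is a $\mathbb Z$-linear form in the coefficients $b_{i_0,\ldots,i_n}$ of $g$, read off from the expansions $\prod_i(\sum_k A_{ik}S_k)^{i_i}$. Since each factor $\sum_k A_{ik}S_k$ has length at most $3$, such a product has length at most $3^{i_0+\cdots+i_n}=3^\delta$, and summing over the $\binom{n+\delta}{\delta}$ monomials of $g$ shows that each $c_{e_0,e_1,e_2}$, regarded as a linear form in the $b$'s, has length at most $\binom{n+\delta}{\delta}3^\delta$. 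Let the $\phi_j$ be the polynomials in the $b$'s obtained from the various $\psi^A_\ell$ by substituting these linear forms for the $c$'s; a linear substitution preserves the degree (of the generic member of the family), so $\deg\phi_j=\delta^2-1$, while a degree-$(\delta^2-1)$ monomial in the $c$'s becomes a product of $\delta^2-1$ linear forms of length $\leqslant\binom{n+\delta}{\delta}3^\delta$, hence of length $\leqslant\bigl[\binom{n+\delta}{\delta}3^\delta\bigr]^{\delta^2-1}$. Therefore the length of $\phi_j$ is $<\delta^{3\delta^2-3}\bigl[\binom{n+\delta}{\delta}3^\delta\bigr]^{\delta^2-1}$, as required.

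The two assertions then follow. If $g$ is reducible, then each $g_A$ ($A\in\mathcal A$) is either $0$ or reducible: in the first case $\psi^A_\ell$ vanishes on the zero coefficient vector (using $\delta^2-1\geqslant1$), in the second it vanishes on the coefficients of $g_A$ by Proposition \ref{criterion of irreducibility of curves}(1); hence $\phi_j(b_{i_0,\ldots,i_n})=0$ for all $j$, with no hypothesis on $\mathrm{char}\,k$. If $g$ is absolutely irreducible and $\mathrm{char}\,k=0$, choose $A\in\mathcal A$ with $g_A$ absolutely irreducible; Proposition \ref{criterion of irreducibility of curves}(2) gives an $\ell$ with $\psi^A_\ell$ not vanishing on the coefficients of $g_A$, so the corresponding $\phi_j$ satisfies $\phi_j(b_{i_0,\ldots,i_n})\neq0$.

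The main obstacle is the construction of $\mathcal A$: one needs an effective form of the Bertini (Noether) irreducibility theorem, bounding the locus of ``bad'' $2$-planes in the Grassmannian $\mathbb G(2,n)$ --- those $E_A$ with $g_A$ reducible or $g_A=0$ --- together with the statement that this locus misses at least one $2$-plane represented by a matrix with entries in $\{-1,0,1\}$. A natural route is to cut $V(g)$ by one hyperplane at a time, descending from $\mathbb P^n$ to $\mathbb P^2$ and controlling at each stage the bounded-degree locus of hyperplanes for which irreducibility is lost. Keeping the entries within $\{-1,0,1\}$ --- so that the length estimate produces exactly the factor $3^\delta$ rather than a larger, $\delta$-dependent power --- is the delicate point, and is precisely where the explicit combinatorial families of \cite{Ruppert1986} are used.
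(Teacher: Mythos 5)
The paper does not prove this statement: it is quoted verbatim from Ruppert's 1986 article (Satz~4), so there is no in-paper proof to compare against. Evaluating your argument on its own terms, the overall reduction-to-plane-curves strategy is plausible and your bookkeeping of degrees and lengths is essentially correct (submultiplicativity of the $\ell_1$-norm, linearity of each $c_{e_0,e_1,e_2}$ in the $b$'s with coefficients bounded by $3^\delta$, and the factor $\binom{n+\delta}{\delta}$ from the number of monomials). The degenerate case $g_A=0$ is also handled correctly via $\delta^2-1\geqslant 1$.

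The genuine gap is the one you flag yourself: the existence of a \emph{universal} finite set $\mathcal A\subset\{-1,0,1\}^{3\times(n+1)}$ of rank-$3$ matrices such that for \emph{every} absolutely irreducible degree-$\delta$ form $g$ over \emph{every} algebraically closed field of characteristic $0$, some $g_A$ ($A\in\mathcal A$) is absolutely irreducible. This is not a routine effective-Bertini statement. The cardinality of $\mathcal A$ is at most $3^{3(n+1)}$, fixed once $n$ is fixed, whereas the ``bad'' locus in $\mathbb G(2,n)$ for a given $g$ has degree growing with $\delta$; it is not at all obvious (and you give no argument) that one single finite set of rational $2$-planes escapes the bad locus of every irreducible $g$ of every degree. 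Without this, parts (1)--(2) are not proved.

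There is a cleaner route that avoids constructing $\mathcal A$ altogether and still yields exactly the stated bounds: treat the substitution coefficients as \emph{indeterminates} $\lambda_{jk}$ ($0\leqslant j\leqslant n$, $0\leqslant k\leqslant 2$), set $T_j=\lambda_{j0}S_0+\lambda_{j1}S_1+\lambda_{j2}S_2$, and apply Satz~3 to the resulting degree-$\delta$ ternary form with coefficients $c_{e_0,e_1,e_2}\in\mathbb Z[b_{i_0,\ldots,i_n},\lambda_{jk}]$. Each $c_{e_0,e_1,e_2}$ is linear in the $b$'s, and the coefficient of $b_{i_0,\ldots,i_n}$ is the $S_0^{e_0}S_1^{e_1}S_2^{e_2}$-coefficient of $\prod_j(\lambda_{j0}S_0+\lambda_{j1}S_1+\lambda_{j2}S_2)^{i_j}$, a polynomial in $\lambda$ with nonnegative integer coefficients of total length at most $3^{i_0+\cdots+i_n}=3^\delta$; hence $\mathrm{len}(c_{e_0,e_1,e_2})\leqslant\binom{n+\delta}{\delta}3^\delta$ as before. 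Now take the polynomials $\phi_j$ to be the $\lambda$-coefficients of the $\psi_\ell(c)$: they are homogeneous of degree $\delta^2-1$ in the $b$'s and satisfy the same length bound. For (1), if $g$ is reducible then $g_A$ is reducible or zero for every numerical specialization of $\lambda$, so $\psi_\ell(c)$ vanishes identically in $\lambda$ and all $\phi_j$ vanish. For (2), classical Bertini (over an algebraically closed field of characteristic $0$) gives a nonempty Zariski-open set of $\lambda$ with $g_A$ absolutely irreducible; for some $\ell$ the open set $\{\lambda:\psi_\ell(c)\neq 0\}$ is nonempty, so $\psi_\ell(c)\in k[\lambda]$ is nonzero and some $\lambda$-coefficient $\phi_j$ is nonzero. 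This repairs the proof without any explicit combinatorial family, and explains the $3$ in $3^\delta$ as the number of target variables rather than as the size of an entry alphabet.
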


\section{Control of the non-geometrically integral reductions}
By Proposition \ref{criterion of irreducibility of curves} and \ref{criterion of irreducibility}, Ruppert gives a control of non-geometrically integral reductions of hypersurfaces in $\mathbb P^n_{\mathbb Z}$ in \cite[Korollar 1, Korollar 2]{Ruppert1986}. In this part, we will give such a control over an arbitrary number field $K$ for general projective schemes.
\subsection{Non-geometrically integral reductions of hypersurfaces}
For the case of hypersurfaces, by applying Proposition \ref{criterion of irreducibility of curves} and \ref{criterion of irreducibility} to an adelicly primitive polynomial introduced at \eqref{adelicly primitive polynomial}, we have the following two results. Since their proofs are quite similar, we only provide the detailed proof for the case of general hypersurfaces.
\begin{prop}\label{upper bound of non-geometrically integral reductions of hypersurfaces}
  Let $X$ be a geometrically integral hypersurface in $\mathbb P^n_K$ of degree $\delta$, $\mathscr X$ be its Zariski closure in $\mathbb P^n_{\O_K}$, $\mathscr X_{\f_\p}=\mathscr X\times_{\spec\O_K}\spec\f_\p$, and
\[\mathcal Q(\mathscr X)=\left\{\p\in\spm\O_K|\;\mathscr X_{\f_\p}\rightarrow\spec\f_\p\hbox{ is not geometrically integral}\right\}.\]
 Then we have
  \begin{equation*}
  \frac{1}{[K:\Q]}\sum_{\p\in\mathcal Q(\mathscr X)}\log N(\p)\leqslant(\delta^2-1)h(X)+C(n,\delta),
\end{equation*}
  where $N(\p)=\#(\O_K/\p)$, $h(X)$ is the classic height of $X$ in $\mathbb P^n_K$ defined in Definition \ref{classic height of hypersurface}, and the constant
  \[C(n,\delta)=(\delta^2-1)\left(3\log\delta+\delta\log3+\log{n+\delta\choose\delta}\right).\]
\end{prop}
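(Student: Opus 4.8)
The plan is to apply Ruppert's numerical criterion (Proposition~\ref{criterion of irreducibility}) to an adelicly primitive polynomial $F(T_0,\ldots,T_n)$ of the defining polynomial $f$ of $X$, as constructed in \eqref{adelicly primitive polynomial}, and then to extract a product formula / height-comparison estimate from the polynomials $\phi_j$. First I would observe that, by the discussion at the start of \S3 together with \cite[Exercise~2.4.1]{LiuQing}, a prime $\p$ lies in $\mathcal Q(\mathscr X)$ if and only if the reduction $F^{(\p)}(T_0,\ldots,T_n) \bmod \p$ fails to be absolutely irreducible over $\f_\p$; since $\f_\p$ has positive characteristic, part~(1) of Proposition~\ref{criterion of irreducibility} tells us that for such $\p$ we have $\phi_j\bigl((b^{\p}_{i_0,\ldots,i_n} \bmod \p)\bigr) = 0$ in $\f_\p$ for every $j\in J$, i.e.\ $\p$ divides $\phi_j(b^{\p}_{i_0,\ldots,i_n})$ (viewed in $\O_{K,\p}$) for all $j$. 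Here the $b^{\p}_{i_0,\ldots,i_n}$ are the $\p$-components of the adelic coefficients $b_{i_0,\ldots,i_n} = c\,\Delta(a_{i_0,\ldots,i_n})$, which by construction are $\p$-integral with at least one of them a $\p$-unit.

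The key point is then a product-formula estimate. Fix any index $j_0\in J$; I want to bound $\sum_{\p\in\mathcal Q(\mathscr X)}\log N(\p)$. Since $\phi_{j_0}$ is homogeneous of degree $\delta^2-1$ with integer coefficients and length $<\delta^{3\delta^2-3}\bigl[\binom{n+\delta}{\delta}3^\delta\bigr]^{\delta^2-1}$, for each $v\in M_K$ we have the multiplicative triangle inequality
\[
\bigl|\phi_{j_0}\bigl((b^v_{i_0,\ldots,i_n})\bigr)\bigr|_v \leqslant C_v \cdot \max_{(i_0,\ldots,i_n)}\{|b^v_{i_0,\ldots,i_n}|_v\}^{\delta^2-1},
\]
where $C_v=1$ for $v\in M_{K,f}$ (Gauss's lemma / ultrametric inequality) and $C_v$ is the length bound for $v\in M_{K,\infty}$. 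I would apply this not to a single $\phi_{j_0}$ but organize it so that, by part~(1) of the criterion, for $\p\in\mathcal Q(\mathscr X)$ the value $\phi_{j_0}((b^{\p}_{i_0,\ldots,i_n}))$ is divisible by $\p$, hence $|\phi_{j_0}((b^{\p}_{i_0,\ldots,i_n}))|_\p \leqslant N(\p)^{-1/[K_\p:\Q_\p]}$ per normalized place; the delicate part is that $\phi_{j_0}$ might vanish identically as an adelic element, which is where I need part~(2): since $X$ is geometrically integral and $K$ has characteristic $0$, the $\phi_j$ applied to the genuine (diagonal) coefficients do not all vanish, so there is some $j_0$ with $\phi_{j_0}(\Delta(a_{i_0,\ldots,i_n}))\neq 0$ in $K$, and then $\phi_{j_0}((b^v_{i_0,\ldots,i_n}))_v = c^{\delta^2-1}\Delta(\phi_{j_0}(a_{i_0,\ldots,i_n}))$ is a nonzero adelic element with $|c^{\delta^2-1}|_{\mathbb A_K}=1$, to which the product formula over $K$ applies.

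Putting these together: the product formula for $\phi_{j_0}(a_{i_0,\ldots,i_n})\in K^\times$ gives $\prod_{v\in M_K}|\phi_{j_0}((b^v_{i_0,\ldots,i_n}))|_v^{[K_v:\Q_v]}=1$. Splitting off the contribution of $\p\in\mathcal Q(\mathscr X)$ (each contributing a factor $\leqslant N(\p)^{-1}$ since $\p\mid\phi_{j_0}$), bounding the remaining finite places by $1$ via $|b^v_{i_0,\ldots,i_n}|_v\leqslant 1$, and bounding the infinite places by $C_v\cdot\max\{|b^v|_v\}^{\delta^2-1}$, I obtain
\[
\prod_{\p\in\mathcal Q(\mathscr X)} N(\p) \;\leqslant\; \Bigl(\prod_{v\in M_{K,\infty}} C_v^{[K_v:\Q_v]}\Bigr)\cdot H_{\mathbb A_K}(F)^{\delta^2-1}.
\]
Taking $\frac{1}{[K:\Q]}\log$ of both sides, using $h(F)=h(X)$ (which holds because $H_{\mathbb A_K}(F)=H_{\mathbb A_K}(g)=H_K(f)$ by \eqref{compare adelic height and classic height} and the construction of $F$, as multiplication by $c$ with $|c|_{\mathbb A_K}=1$ does not change the adelic height), and the fact that $\sum_{v\in M_{K,\infty}}[K_v:\Q_v]=[K:\Q]$ so the archimedean length factor contributes exactly $\log\bigl(\delta^{3\delta^2-3}[\binom{n+\delta}{\delta}3^\delta]^{\delta^2-1}\bigr) = (\delta^2-1)(3\log\delta+\delta\log 3+\log\binom{n+\delta}{\delta})$, yields the claimed inequality with the stated $C(n,\delta)$. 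The main obstacle is the bookkeeping in the product-formula step: one must be careful that the chosen $\phi_{j_0}$ works simultaneously for all $\p\in\mathcal Q(\mathscr X)$ (it does, since part~(1) forces \emph{every} $\phi_j$ to vanish mod $\p$, so any single nonzero $\phi_{j_0}$ suffices), and that the length/degree bounds in Proposition~\ref{criterion of irreducibility} are exactly matched to the constant — together with verifying that no extra contribution from finite places outside $\mathcal Q(\mathscr X)$ or from the archimedean absolute values of the $b^v$ sneaks in, which is controlled precisely because $F$ is adelicly primitive.
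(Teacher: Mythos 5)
Your proposal is correct and follows essentially the same route as the paper's own proof: select a $\phi_{j_0}$ that is nonzero on the coefficients of $f$ (justified by part~(2) of Ruppert's criterion in characteristic $0$), observe via part~(1) that every $\phi_j$ vanishes modulo $\p$ for $\p\in\mathcal Q(\mathscr X)$, invoke the product formula using that $F$ is adelicly primitive, and absorb the archimedean contribution via the length and degree bounds for $\phi_{j_0}$. The only difference is cosmetic: you work multiplicatively with the product formula while the paper takes logarithms and sums over places; one small quibble is that the clause ``since $\f_\p$ has positive characteristic'' before invoking part~(1) is superfluous, as part~(1) holds in any characteristic (it is part~(2) that requires characteristic $0$).
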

\begin{proof}
  Suppose $X$ is defined by the homogeneous polynomial
  \[f(T_0,\ldots,T_n)=\sum_{\begin{subarray}{c} (i_0,\ldots,i_n)\in\mathbb{N}^{n+1}\\i_0+\cdots+i_n=\delta\end{subarray}}a_{i_0,\ldots,i_n}T_0^{i_0}\cdots T_n^{i_n}\]
  with coefficients in $K$, and
   \begin{equation*}
F(T_0,\ldots,T_n)=\sum_{\begin{subarray}{c} (i_0,\ldots,i_n)\in\mathbb{N}^{n+1}\\i_0+\cdots+i_n=\delta\end{subarray}}b_{i_0,\ldots,i_n}T_0^{i_0}\cdots T_n^{i_n}
\end{equation*}
 be an adelicly primitive polynomial of $f(T_0,\ldots,T_n)$ constructed in \eqref{adelicly primitive polynomial}.
We use the notations in Proposition \ref{criterion of irreducibility}, and choose an index $j\in J$ of the polynomial $\phi_j(b_{i_0,\ldots,i_n})$ with variables $b_{i_0,\ldots,i_n}$, such that $\phi_j(a_{i_0,\ldots,i_n})\neq0$ for the coefficients of $f(T_0,\ldots,T_n)$.

  For each $\p\in\spm\O_K$, since $b_{i_0,\ldots,i_n}^{(\p)}\in\O_{K,\p}$, we have $\left|\phi_j\left(b_{i_0,\ldots,i_n}^{(\p)}\right)\right|_{\p}\leqslant1$ if $\phi_j\left(b_{i_0,\ldots,i_n}^{(\p)}\right)\neq0$. By definition, if the maximal ideal $\p\in\mathcal Q(\mathscr X)$, we have $\left|\phi_j\left(b_{i_0,\ldots,i_n}^{(\p)}\right)\right|_{\p}<1$. Then we obtain
  \begin{eqnarray*}
    \frac{1}{[K:\Q]}\sum_{\p\in\mathcal Q(\mathscr X)}\log N(\p)&\leqslant&-\sum_{\p\in\mathcal Q(\mathscr X)}\frac{[K_\p:\Q_\p]}{[K:\Q]}\log\left(\left|\phi_j\left(b_{i_0,\ldots,i_n}^{(\p)}\right)\right|_{\p}\right)\\
    &\leqslant&-\sum_{\p\in\spm\O_K}\frac{[K_\p:\Q_\p]}{[K:\Q]}\log\left(\left|\phi_j\left(b_{i_0,\ldots,i_n}^{(\p)}\right)\right|_{\p}\right)\\
    &=&\frac{1}{[K:\Q]}\sum_{v\in M_{K,\infty}}\log\left(\left|\phi_j\left(b_{i_0,\ldots,i_n}^{(v)}\right)\right|_{v}\right).
  \end{eqnarray*}
  In order to estimate $\log\left(\left|\phi_j\left(b_{i_0,\ldots,i_n}^{(v)}\right)\right|_{v}\right)$ for a fixed $v\in M_{K,\infty}$, from the properties of $\phi_j$ given in Proposition \ref{criterion of irreducibility}, we have
    \begin{eqnarray}\label{norm of universal poly}
      \log\left(\left|\phi_j\left(b_{i_0,\ldots,i_n}^{(v)}\right)\right|_{v}\right)&\leqslant&(\delta^2-1)\log\left(\max_{\begin{subarray}{c} (i_0,\ldots,i_n)\in\mathbb{N}^{n+1}\\i_0+\cdots+i_n=\delta\end{subarray}}\left\{|b^{(v)}_{i_0,\ldots,i_n}|_v\right\}\right)\\
      & &\;+(\delta^2-1)\left(3\log\delta+\delta\log3+\log{n+\delta\choose\delta}\right).\nonumber
    \end{eqnarray}
Then from \eqref{norm of universal poly}, we obtain
\begin{eqnarray*}
  & &\frac{1}{[K:\Q]}\sum_{v\in M_{K,\infty}}\log\left(\left|\phi_j\left(b_{i_0,\ldots,i_n}^{(v)}\right)\right|_{v}\right)\\
  &\leqslant&\frac{\delta^2-1}{[K:\Q]}\sum_{v\in M_{K,\infty}}\log\left(\max_{\begin{subarray}{c} (i_0,\ldots,i_n)\in\mathbb{N}^{n+1}\\i_0+\cdots+i_n=\delta\end{subarray}}\left\{|b^{(v)}_{i_0,\ldots,i_n}|_v\right\}\right)\\
      & &\;+(\delta^2-1)\left(3\log\delta+\delta\log3+\log{n+\delta\choose\delta}\right)\\
      &=&(\delta^2-1)h(X)+(\delta^2-1)\left(3\log\delta+\delta\log3+\log{n+\delta\choose\delta}\right),
\end{eqnarray*}
where the last equality is from \eqref{compare adelic height and classic height} and \eqref{adelicly primitive polynomial}. Then we have the assertion.
\end{proof}
\begin{rema}
With all the notations in Proposition \ref{upper bound of non-geometrically integral reductions of hypersurfaces}, we have $C(n,\delta)\ll_{n}\delta^3$.
\end{rema}
By applying Proposition \ref{criterion of irreducibility of curves} to the proof of Proposition \ref{upper bound of non-geometrically integral reductions of hypersurfaces}, we have the following estimate for plane curves, where we will only point out the key difference in the proofs.
\begin{prop}\label{upper bound of non-geometrically integral reductions of plane curves}
  Let $X$ be a geometrically integral plane curve in $\mathbb P^2_K$ of degree $\delta$, $\mathscr X$ be its Zariski closure in $\mathbb P^2_{\O_K}$, $\mathscr X_{\f_\p}=\mathscr X\times_{\spec\O_K}\spec\f_\p$, and
\[\mathcal Q(\mathscr X)=\left\{\p\in\spm\O_K|\;\mathscr X_{\f_\p}\rightarrow\spec\f_\p\hbox{ is not geometrically integral}\right\}.\]
Then we have
  \begin{equation*}
  \frac{1}{[K:\Q]}\sum_{\p\in\mathcal Q(\mathscr X)}\log N(\p)\leqslant(\delta^2-1)h(X)+C(\delta),
\end{equation*}
  where $N(\p)=\#(\O_K/\p)$, $h(X)$ is the classic height of $X$ in $\mathbb P^n_K$ defined in Definition \ref{classic height of hypersurface}, and the constant $C(\delta)=(3\delta^2-3)\log\delta$.
\end{prop}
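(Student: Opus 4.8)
The plan is to run the proof of Proposition~\ref{upper bound of non-geometrically integral reductions of hypersurfaces} essentially verbatim, the only substantive change being that Proposition~\ref{criterion of irreducibility of curves} (Ruppert's Satz~3) takes the place of Proposition~\ref{criterion of irreducibility}: since here $n=2$, Satz~3 provides universal polynomials $\{\phi_j\}_{j\in J}$ in the coefficients of the defining form, homogeneous of degree $\delta^2-1$, but now with the sharper length bound $\delta^{3\delta^2-3}$ instead of $\delta^{3\delta^2-3}\bigl[{n+\delta\choose\delta}3^\delta\bigr]^{\delta^2-1}$.

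First I would write $X=\proj\bigl(K[T_0,T_1,T_2]/(f)\bigr)$ with $f$ homogeneous of degree $\delta$, and pass to an adelicly primitive polynomial $F=\sum_{i_0+i_1+i_2=\delta}b_{i_0,i_1,i_2}T_0^{i_0}T_1^{i_1}T_2^{i_2}$ of $f$ as in \eqref{adelicly primitive polynomial}, with $b_{i_0,i_1,i_2}=c\,\Delta(a_{i_0,i_1,i_2})$ and $|c|_{\mathbb A_K}=1$. Since $X$ is geometrically integral, $f$ is absolutely irreducible over $\overline K$; as $\car K=0$, part~(2) of Proposition~\ref{criterion of irreducibility of curves} then yields an index $j\in J$ with $\phi_j(a_{i_0,i_1,i_2})\neq0$, which I fix. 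Exactly as in the hypersurface case, for every $\p\in\spm\O_K$ one has $\bigl|\phi_j(b^{(\p)}_{i_0,i_1,i_2})\bigr|_\p\leqslant1$ (integer coefficients, $F\in\mathbb A_{\O_K}[T_0,T_1,T_2]$), and for $\p\in\mathcal Q(\mathscr X)$ the reduction $F^{(\p)}\bmod\p$ fails to be absolutely irreducible by \cite[Exercise~2.4.1]{LiuQing} (in the projective form of \cite[Remarque~5.2]{Liu-reduced}), so part~(1) forces $\phi_j(b^{(\p)}_{i_0,i_1,i_2})\in\p$, i.e.\ $\bigl|\phi_j(b^{(\p)}_{i_0,i_1,i_2})\bigr|_\p<1$. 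Summing the resulting estimates over $\mathcal Q(\mathscr X)$, enlarging the sum to all finite places, and then invoking the product formula for $\phi_j(a_{i_0,i_1,i_2})\in K^\times$ --- here using that $\phi_j$ is homogeneous of degree $\delta^2-1$ and $|c|_{\mathbb A_K}=1$, so that $\prod_{v\in M_K}\bigl|\phi_j(b^{(v)}_{i_0,i_1,i_2})\bigr|_v^{[K_v:\Q_v]}=1$ --- gives
\[\frac{1}{[K:\Q]}\sum_{\p\in\mathcal Q(\mathscr X)}\log N(\p)\leqslant\frac{1}{[K:\Q]}\sum_{v\in M_{K,\infty}}\log\Bigl(\bigl|\phi_j(b^{(v)}_{i_0,i_1,i_2})\bigr|_v\Bigr).\]

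For the archimedean part, at each $v\in M_{K,\infty}$ the triangle inequality together with the degree and length of $\phi_j$ gives
\[\log\Bigl(\bigl|\phi_j(b^{(v)}_{i_0,i_1,i_2})\bigr|_v\Bigr)\leqslant(\delta^2-1)\log\Bigl(\max_{i_0+i_1+i_2=\delta}\bigl|b^{(v)}_{i_0,i_1,i_2}\bigr|_v\Bigr)+(3\delta^2-3)\log\delta,\]
which is \eqref{norm of universal poly} with the additive constant $(\delta^2-1)\bigl(3\log\delta+\delta\log3+\log{n+\delta\choose\delta}\bigr)$ replaced by $\log\bigl(\delta^{3\delta^2-3}\bigr)=(3\delta^2-3)\log\delta$. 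Summing over $v\in M_{K,\infty}$, dividing by $[K:\Q]$, and using \eqref{compare adelic height and classic height} together with the adelic primitivity of $F$ (whereby the finite-place maxima of the $b^{(v)}_{i_0,i_1,i_2}$ equal $1$ and the archimedean contribution recovers $h(X)$) yields the assertion with $C(\delta)=(3\delta^2-3)\log\delta$.

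I do not expect a real obstacle: all the estimates are already carried out in the proof of Proposition~\ref{upper bound of non-geometrically integral reductions of hypersurfaces}, and the only thing to keep track of is the arithmetic $\log\bigl(\delta^{3\delta^2-3}\bigr)=(3\delta^2-3)\log\delta$ coming from the smaller length bound in Satz~3. The two points worth stating explicitly are that geometric integrality of $X$ (in characteristic $0$) is equivalent to absolute irreducibility of the defining form $f$, which is what activates part~(2) of Proposition~\ref{criterion of irreducibility of curves}, and that \cite[Exercise~2.4.1]{LiuQing} is precisely what translates non-geometric-integrality of $\mathscr X_{\f_\p}$ into non-absolute-irreducibility of $F^{(\p)}\bmod\p$, so that part~(1) applies.
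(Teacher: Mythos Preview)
Your proposal is correct and follows exactly the approach indicated in the paper's own (sketch) proof: rerun the argument of Proposition~\ref{upper bound of non-geometrically integral reductions of hypersurfaces}, substituting the sharper length bound $\delta^{3\delta^2-3}$ from Proposition~\ref{criterion of irreducibility of curves} into the archimedean estimate~\eqref{norm of universal poly}. Your write-up is in fact more explicit than the paper's sketch (in particular you spell out why the product formula applies to the adelic quantities $\phi_j(b^{(v)}_{i_0,i_1,i_2})$ via homogeneity and $|c|_{\mathbb A_K}=1$), but the strategy is identical.
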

\begin{proof}[Sketch of the proof]
 We replace \eqref{norm of universal poly} by the upper bound of the length in Proposition \ref{criterion of irreducibility of curves} in the proof of Proposition \ref{upper bound of non-geometrically integral reductions of hypersurfaces}, then we prove the assertion.
\end{proof}
\begin{rema}
With all the notations in Proposition \ref{upper bound of non-geometrically integral reductions of plane curves}, we have $C(\delta)\ll\delta^2\log\delta$, which has a better dependence on the degree than the case of general hypersurfaces provided in Proposition \ref{upper bound of non-geometrically integral reductions of hypersurfaces}. If we only consider the dependence on the degree of plane curves, this estimate has the same as the later improvements.
\end{rema}
\subsection{Non-geometrically reductions of general projective schemes}
In order to study the non-geometrically reductions of general schemes, it is significant to understand the reductions over their Chow varieties or Cayley varieties. Then we will reduce the general case to that of hypersurfaces. In this paper, will only use Cayley varieties, and Chow varieties are only mentioned for a historical reason.
\subsubsection{Cayley variety}
First, we briefly recall the construction of Cayley varieties. For more details applied in the quantitative arithmetics, we refer the readers to \cite[\S3]{Chen1}, see also \cite[\S2]{Liu-reduced} for the application to the study of the non-reduced reductions.

Let $A$ be a Dedekind domain or a field, $\sE$ be a vector bundle of rank $n+1$ over $\spec A$, and $d\in\mathbb N$ satisfying $1\leqslant d\leqslant n-1$. We denote
\[\theta: \sE^\vee\otimes_A\left(\wedge^{d+1}\sE\right)\rightarrow\wedge^d\sE\]
the homomorphism which maps $\xi\otimes(x_0\wedge\cdots\wedge x_n)$ to
\[\sum_{i=0}^d(-1)^i\xi(x_i)x_0\wedge\cdots\wedge x_{i-1}\wedge x_{i+1}\wedge\cdots\wedge x_d.\]
Let $\Gamma$ be the sub-variety of $\mathbb P(\sE)\times_{\spec A}\mathbb P(\wedge^{d+1}\sE^\vee)$ which classifies the all the points $(\xi,\alpha)$ such that $\theta(\xi\otimes\alpha)=0$. Let $p:\mathbb P(\sE)\times_{\spec A}\mathbb P\left(\wedge^{d+1}\sE^\vee\right)\rightarrow\mathbb P(\sE)$ and $q:\mathbb P(\sE)\times_{\spec A}\mathbb P\left(\wedge^{d+1}\sE^\vee\right)\rightarrow\mathbb P\left(\wedge^{d+1}\sE^\vee\right)$ be the two canonical projections.

Next, let $\overline{\sE}$ be a Hermitian vector bundle on $\spec\O_K$, $X$ be a pure dimensional closed sub-scheme of $\mathbb P(\sE_K)$ of dimension $d$ and degree $\delta$, and $\mathscr X$ be the Zariski closure of $X$ in $\mathbb P(\sE)$. By \cite[Proposition 3.4]{Chen1} or \cite[Proposition 2.2, Proposition 2.7]{Liu-reduced}, the scheme $q(\Gamma\cap p^{-1}(X))$ (\resp $q(\Gamma\cap p^{-1}(\mathscr X))$) is a geometrically integral hypersurface in $\mathbb P\left(\wedge^{d+1}\sE_K^\vee\right)$ (\resp $\mathbb P\left(\wedge^{d+1}\sE^\vee\right)$), and $q(\Gamma\cap p^{-1}(X))$ is of degree $\delta$. We call these hypersurfaces the \textit{Cayley varieties} of $X$ and $\mathscr X$. We denote by $\Psi_{X}\hookrightarrow\mathbb P\left(\wedge^{d+1}\sE_K^\vee\right)$ and $\Psi_{\mathscr X}\hookrightarrow\mathbb P\left(\wedge^{d+1}\sE^\vee\right)$ the Cayley varieties of $X$ and $\mathscr X$ respectively.

By \cite[\S4.3.2 (i), (iv)]{BGS94}, the construction of Cayley varieties commutes with the extension from $X\hookrightarrow\mathbb P(\sE_K)$ to $\mathscr X\hookrightarrow\mathbb P(\sE)$, and commutes with the base change from $\O_K$ to its residue field, see \cite[\S4.3.1]{BGS94} or \cite[Proposition 2.7]{Liu-reduced} for more details of the above argument. Then in order to control the non-geometrically integral reductions of $\mathscr X\rightarrow\spec\O_K$, we are able to consider the non-geometrically reductions of its Cayley varieites.
\subsubsection{Control of the non-geometrically integral reductions}\label{general non-geometrically reductions}
With the above constructions, we consider the non-geometrically reductions of general projective schemes below. We pick $\overline{\sE}=\left(\O_K^{\oplus(n+1)},(\|\ndot\|_v)_{v\in M_{K,\infty}}\right)$, where for each $v\in M_{K,\infty}$, the norm $\|\ndot\|_v$ maps $(x_0,\ldots,x_n)$ to $\sqrt{|x_0|^2_v+\cdots+|x_n|^2_v}$. In this case, we denote $\mathbb P(\sE_K)$ and $\mathbb P(\sE)$ by $\mathbb P^n_K$ and $\mathbb P^n_{\O_K}$ respectively for simplicity.
\begin{theo}\label{non-geometricaly integral reduction of general schemes}
  With all the above notations and conditions in \S \ref{general non-geometrically reductions}. Let $X$ be a geometrically integral closed sub-scheme of $\mathbb P^n_K$ of pure dimension $d$ and degree $\delta$, $\mathscr X$ be the Zariski closure of $X$ in $\mathbb P^n_{\O_K}$, $\mathscr X_{\f_\p}=\mathscr X\times_{\spec\O_K}\spec\f_\p$, and
  \[\mathcal Q(\mathscr X)=\left\{\p\in\spm\O_K|\;\mathscr X_{\f_\p}\rightarrow\spec\f_\p\hbox{ is not geometrically integral}\right\}.\]
We denote $N(n,d)={n+1\choose d+1}-1$, $N(\p)=\#(\O_K/\p)$, and $\mathcal H_m=1+\cdots+\frac{1}{m}$. Then we have
\begin{equation*}
  \frac{1}{[K:\Q]}\sum_{\p\in\mathcal Q(\mathscr X)}\log N(\p)\leqslant(\delta^2-1)h_{\overline{\O(1)}}(X)+C'(n,d,\delta),
\end{equation*}
  where $\overline{\O(1)}$ is equipped with the corresponding Fubini-Study metrics for all $v\in M_{K,\infty}$, $h_{\overline{\O(1)}}(X)$ is the Arakelov height of $X$ in $\mathbb P^n_K$ defined in Definition \ref{arakelov height of projective variety}, and the constant
    \begin{eqnarray*}C'(n,d,\delta)&=&(\delta^2-1)\Bigg(3\log\delta+\log{N(n,d)+\delta\choose\delta}+\\
  & &\left((N(n,d)+1)\log 2+4\log(N(n,d)+1)+\log3-\frac{1}{2}\mathcal H_{N(n,d)}\right)\delta\Bigg).
  \end{eqnarray*}
\end{theo}
\begin{proof}
  Let $\Psi_{\mathscr X}$ be the Cayley variety of $\mathscr X$, and
  \begin{equation*}\mathcal Q(\Psi_{\mathscr X})=\left\{\p\in\spm\O_K|\Psi_{\mathscr X}\times_{\spec\O_K}\spec\f_\p\hbox{ is not geometrically integral}\right\}.\end{equation*}
  Then by \cite[Proposition 2.2, 2.7]{Liu-reduced}, the fact $\p\in\mathcal Q(\Psi_{\mathscr X})$ is verified if and only if $\mathscr X\times_{\spec\O_K}\spec\f_\p\rightarrow\spec\f_\p$ is not geometrically integral. So we obtain
  \[\frac{1}{[K:\Q]}\sum_{\p\in\mathcal Q(\mathscr X)}\log N(\p)=\frac{1}{[K:\Q]}\sum_{\p\in\mathcal Q(\Psi_{\mathscr X})}\log N(\p).\]
  By Proposition \ref{upper bound of non-geometrically integral reductions of hypersurfaces}, we have
  \begin{eqnarray*}
& &\frac{1}{[K:\Q]}\sum_{\p\in\mathcal Q(\Psi_{\mathscr X})}\log N(\p)\\
&\leqslant&(\delta^2-1)h(\Psi_{X})+(\delta^2-1)\left(3\log\delta+\delta\log3+\log{N(n,d)+\delta\choose\delta}\right),
  \end{eqnarray*}
where $h(\Psi_{X})$ is defined in Definition \ref{classic height of hypersurface}.

By \cite[Proposition 3.7]{Liu-reduced}, we have
\[h(\Psi_{X})-h_{\overline{\O(1)}}(X)\leqslant(N(n,d)+1)\delta\log 2+4\delta\log(N(n,d)+1)-\frac{1}{2}\delta\mathcal H_{N(n,d)}.\]
So we obtain the assertion by combining the above estimates.
\end{proof}
\begin{rema}
We consider the constant $C'(n,d,\delta)$ in Theorem \ref{non-geometricaly integral reduction of general schemes}. Then we have $C'(n,d,\delta)\ll_n\delta^3$. Due to the comparison of heights, we have the same estimate of this constant to the case of curves and of general dimensions if we choose the Arakelov height.
\end{rema}
\appendix
\section{A criterion of the reduced and irreducible properties}
In this appendix, we will give a criterion of the reduced and irreducible hypersurfaces, which is exactly a solution to the exercise \cite[Exercise 2.4.1]{LiuQing}. This result is useful to judge whether a scheme is geometrically integral.
\begin{prop}
  Let $k$ be a field and $P\in k[T_1,\ldots,T_n]$. Then the scheme $\spec\left(k[T_1,\ldots,T_n]/(P)\right)$ is reduced (\resp irreducible; \resp integral) if and only if $P$ has no square factor (\resp admits only one irreducible factor; \resp is irreducible).
\end{prop}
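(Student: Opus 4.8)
The plan is to exploit that $A:=k[T_1,\ldots,T_n]$ is a Noetherian unique factorization domain (by Gauss's lemma and induction on $n$) and to translate each of the three scheme-theoretic conditions on $\spec(A/(P))$ into a divisibility statement about $P$ via its factorization into irreducibles. First I would dispose of the degenerate cases: if $P$ is a unit then $\spec(A/(P))=\varnothing$, and if $P=0$ then $\spec(A/(P))=\mathbb A^n_k$, and in either case all three equivalences hold by the usual conventions (a unit has no square factor and no irreducible factor; $0$ is not irreducible). So one may assume $P$ is a nonzero non-unit and write $P=c\,Q_1^{e_1}\cdots Q_r^{e_r}$ with $c\in k^\times$, the $Q_i$ pairwise non-associate irreducibles, and $e_i\geqslant 1$.

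The key algebraic input is a primary decomposition of the principal ideal $(P)$. Using that in a UFD one has $(Q_i^{e_i})\cap(Q_j^{e_j})=(Q_i^{e_i}Q_j^{e_j})$ for $i\neq j$, one obtains $(P)=\bigcap_{i=1}^{r}(Q_i^{e_i})$; and a short valuation-theoretic check, using that each $Q_i$ is a prime element, shows that $(Q_i^{e_i})$ is $(Q_i)$-primary with $\sqrt{(Q_i^{e_i})}=(Q_i)$, so that this is an irredundant primary decomposition. It follows that the minimal primes of $A$ over $(P)$ are exactly $(Q_1),\ldots,(Q_r)$, hence the minimal primes of $A/(P)$ are their images, and that the nilradical of $A/(P)$ equals $\sqrt{(P)}/(P)=(Q_1\cdots Q_r)/(P)$.

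From here the three equivalences follow formally. The scheme $\spec(A/(P))$ is \emph{reduced} iff $A/(P)$ has trivial nilradical iff $(P)=(Q_1\cdots Q_r)$ iff $P$ and $Q_1\cdots Q_r$ are associate iff $e_1=\cdots=e_r=1$, i.e. $P$ has no square factor. It is \emph{irreducible} iff $A/(P)$ has a unique minimal prime iff $r=1$, i.e. $P$ has (up to a unit) a single irreducible factor. Finally it is \emph{integral} iff it is simultaneously reduced and irreducible iff $r=1$ and $e_1=1$, i.e. iff $P$ is irreducible; equivalently, $(P)$ is prime iff $P$ is a prime element iff $P$ is irreducible, since $A$ is a UFD.

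The statement is essentially routine, so there is no serious obstacle; the only points that need a little care are the verification that $(Q^e)$ is $(Q)$-primary — equivalently, the behaviour of radicals and intersections of principal ideals in a UFD — and the bookkeeping of the degenerate cases $P\in k$. One should also be careful to invoke the correct form of "irreducible'' for a scheme, namely that its coordinate ring has a unique minimal prime, rather than irreducibility of $P$, which is precisely the statement to be proved.
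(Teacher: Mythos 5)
Your argument is correct in its main content and follows the same basic strategy as the paper's proof: translate each scheme-theoretic condition on $\spec(A/(P))$ into a ring-theoretic condition on $A/(P)$ and then into a statement about the factorization of $P$ in the UFD $A=k[T_1,\ldots,T_n]$. The difference is organizational: the paper argues each case by a short ad hoc appeal to criteria from Liu's textbook (reducedness via the nilradical, irreducibility via ``$\sqrt{I}$ prime''), whereas you package everything at once through the irredundant primary decomposition $(P)=\bigcap_i(Q_i^{e_i})$, which makes the minimal primes and the nilradical of $A/(P)$ visible simultaneously and lets the three equivalences drop out uniformly. That is a cleaner and more systematic route, and, incidentally, it also repairs a typo in the paper's text (``admits only one \emph{square} factor'' in the irreducibility step should read ``admits only one \emph{irreducible} factor,'' exactly as you state it).

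One small slip in the bookkeeping of degenerate cases: you claim that for $P=0$ ``all three equivalences hold by the usual conventions,'' but $\spec(k[T_1,\ldots,T_n]/(0))=\mathbb{A}^n_k$ is reduced, irreducible, and integral, while $0$ is not an irreducible element, does not admit a single irreducible factor, and (every $Q^2$ divides $0$) cannot reasonably be said to have no square factor. So the equivalences actually \emph{fail} for $P=0$; the statement tacitly assumes $P\neq0$, and the cleanest fix is simply to exclude that case rather than to assert it goes through. This does not affect the substance of your argument for nonzero non-unit $P$, which is the case of interest, and the paper itself does not address the degenerate cases either.
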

\begin{proof}
  First, we consider the reduced property. By \cite[Definition 2.4.1, Proposition 2.4.2 (b)]{LiuQing}, $\spec\left(k[T_1,\ldots,T_n]/(P)\right)$ is reduced if and only if the ring $k[T_1,\ldots,T_n]/(P)$ is reduced. In fact, the nilradical of $k[T_1,\ldots,T_n]/(P)$ is not zero if and only if there exists a non-zero element $f\in k[T_1,\ldots,T_n]$, such that $f\not\in (P)$ but $f^m\in (P)$ for some $m\in\mathbb N_+$, which is verified if and only if $(P)$ has at least one square factor.

  Next, we consider the irreducible property. Let $I=(P)$ be an ideal of $k[T_1,\ldots,T_n]$, and then we have $\spec\left(k[T_1,\ldots,T_n]/(P)\right)=V(I)$. By \cite[Proposition 2.4.7 (a)]{LiuQing}, $V(I)$ is irreducible if and only if $\sqrt{I}$ is prime, which is verified if and only if $P$ admits only one square factor.

  By \cite[Definition 2.4.16]{LiuQing}, a scheme is integral if and only it is both reduced and irreducible. Then we obtain the result about the integral property.
\end{proof}
\backmatter

\bibliography{liu}
\bibliographystyle{smfplain}

\end{document}